\newcommand{\fif}{if and only if}
\newcommand{\tfs}{time-frequency shift}
\newtheorem{tm}{Theorem}[section]    
\newtheorem{lemma}[tm]{Lemma}
\newtheorem{prop}[tm]{Proposition}
\theoremstyle{definition}
\newcommand{\beqa}{\begin{eqnarray*}}
\newcommand{\eeqa}{\end{eqnarray*}}
\newcommand{\field}[1]{\mathbb{#1}}
\newcommand{\bR}{\field{R}}        
\newcommand{\bN}{\field{N}}        
\newcommand{\bZ}{\field{Z}}        
\newcommand{\bC}{\field{C}}        
\newcommand{\bQ}{\field{Q}}        
\def\cF{\mathcal{ F}}              
\def\cG{\mathcal{ G}}
\def\<{\left<}
\def\>{\right>}
\def\inv{^{-1}}
\def\mv1{M_v^1}
\newcommand{\gab}{\cG (g,\alpha, \beta)}
\newcommand{\tp}{totally positive}
\newcommand{\ltwo}{L^2(\bR )}
\begin{document}
\begin{abstract}
We show that for an arbitrary totally positive function $g\in L^1(\bR )$ and 
 $\alpha \beta$ rational,  the Gabor family $\{e^{2\pi i \beta l
  t} g(t-\alpha k): k,l \in \bZ \}$ is a frame for $L^2(\mathbb{R})$, if and only if $\alpha
\beta <1$.  
\end{abstract}

\title{ Totally Positive Functions and Gabor Frames over Rational Lattices}
\author{Karlheinz Gr\"ochenig}
\address{Faculty of Mathematics \\
University of Vienna \\
Oskar-Morgenstern-Platz 1 \\
A-1090 Vienna, Austria}
\email{karlheinz.groechenig@univie.ac.at}
\subjclass[2010]{ 42C15 , 42C40 ,  94A20, 42A82}
\date{}
\keywords{Totally positive function, Gabor frame, spectral invariance,
invertibility of \tp\ matrices}
\thanks{K.\ G.\ was
  supported in part by the  project P31887-N32  of the
Austrian Science Fund (FWF)}
\maketitle

\section{Introduction}

Ever since J.\ von Neumann and D.\ Gabor claimed that the family of
\tfs s (phase-space shifts) $\{ e^{2\pi i lt} e^{-\pi (t-k)^2}: k,l\in
\bZ \}$ spans $L^2(\bR ) $, the spanning properties of \tfs s have
sparked the curiosity of mathematicians, physicists, and
engineers. Modern versions of this problem replace the Gaussian by
arbitrary generators and the \tfs\ over the lattice $\bZ ^2$ by
general lattices or even non-uniform sets.

To be specific, the  \tfs\ of $g\in L^2(\bR )$   by $(x,\xi ) \in \bR ^2$ is
defined as
\begin{equation}
  \label{eq:1s}
  M_\xi T_xf(t) = e^{2\pi i \xi t} g(t-x) \, .
\end{equation}
Given  lattice parameters $\alpha ,\beta >0$ the set 
$\cG (g,\alpha,\beta )  = \{ M_{\beta l} T_{\alpha k} g: k,l \in \bZ
\}$ is called  Gabor family  over the lattice $\alpha \bZ
\times \beta \bZ $ with window $g$. Gabor analysis studies the question under which
conditions on $\alpha, \beta $,  and $g$   there exist constants  $A,B>0$, such that 
\begin{equation}
  \label{eq:7}
A \|f\|_2^2  \leq  \sum _{k,l\in \bZ } | \langle f, 
M_{\beta l}T_{\alpha k} g\rangle |^2 \leq B \|f\|_2^2    \qquad \forall f\in L^2(\bR
)  \, .
\end{equation}
If \eqref{eq:7} holds, $\cG (g,\alpha , \beta )$ is called a Gabor
frame.  The set
$$
\cF (g) = \{ (\alpha,\beta ) \in \bR _+^2: \cG (g,\alpha ,\beta ) \,
\text{ is a frame }\}
$$
is called the frame set of $g$. Knowing $\cF (g)$ amounts to a complete
characterization of all (rectangular) lattices $\alpha \bZ \times
\beta \bZ $ that generate a Gabor frame with a given  window $g$. 

Our understanding of the frame set is still very limited. The
prototypical result is due to Lyubarski~\cite{lyub92} and
Seip-Wallsten~\cite{seip92,seip-wallsten} and determines the frame set
of the Gaussian to be $\cF (e^{-\pi t^2}) = \{ (\alpha, \beta ) \in
\bR _+^2: \alpha \beta < 1\}$. This is usually formulated by saying
that $\cG (e^{-\pi t^2}, \alpha , \beta )$ is a frame \fif\ $\alpha
\beta <1$. Subsequently, further examples of frame sets were found by
Janssen and Strohmer~\cite{janssen96,janssen02b,JS02}. It was noted
in~\cite{GS13} that all examples known  until  2010 were based on a  \tp\
window $g$. This observation led to a more systematic approach to
study the frame set for \tp\ windows.

To explain the state-of-the-art we recall that 
a measurable function $g $ on $\bR $  is \tp , if for every
$n\in \bN$ and every two sets of
increasing numbers $x_1 < x_2 <\dots < x_n$ and $y_1< y_2 < \dots <
y_n$ the matrix $\big( g (x_j - y_k) \big) _{j,k=1, \dots ,n}$
has non-negative determinant:
\begin{equation}
  \label{eq:1}
  \det ( g (x_j - y_k) \Big) _{j,k=1, \dots ,n} \geq 0 \, .
\end{equation}
If in addition $g $ is integrable, then $g $ possesses
already exponential decay~\cite[Lemma~2]{sch51}. Often an integrable \tp\ function is called
a  Polya frequency function, but we will not make this distinction
here.

By a deep theorem of Schoenberg~\cite{sch51} the Fourier transform of an
integrable \tp\ function $g$ possesses the factorization
\begin{equation}
  \label{eq:t1}
  \hat{g  } (\xi ) = c e^{-\gamma \xi ^2} e^{2\pi i \nu \xi } \prod
_{j=1}^N (1+2\pi i \nu _j \xi )\inv e^{-2\pi i \nu _j \xi }  
\end{equation}
with $c>0$, $\nu , \nu _j \in \bR $, $\gamma \geq 0$, $N\in \bN \cup \{
\infty \}$ and $0< \gamma + \sum _j \nu _j ^2 <\infty $.

Among the \tp\ functions are the
Gaussian $e^{-\gamma t^2}$ for $\gamma >0$ and  the one-sided exponential functions 
$\eta _\gamma (t) = e^{-\gamma t}
\chi _{\bR ^+} (\gamma t)$ with $\gamma \in \bR , \gamma \neq 0$.  

For the subclass of \tp\ functions, for which the factorization~\eqref{eq:t1}
is a
finite product, the results of~\cite{GS13,GRS18} can be summarized as
follows:
\begin{tm} \label{old} 
  Let $g\in \ltwo $ be a \tp\ function with a finite factorization
  \eqref{eq:t1} with  $N\in
  \bN $ other than the one-sided exponentials  $\eta _\gamma $, then
  $$
\cF (g) = \{ (\alpha, \beta ) \in
\bR _+^2: \alpha \beta < 1\}   \, .
$$
\end{tm}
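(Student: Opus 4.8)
I would split the asserted equality $\cF(g)=\{(\alpha,\beta)\in\bR_+^2:\alpha\beta<1\}$ into its two inclusions, dispatch necessity quickly, and concentrate on sufficiency, which is proved first for rational $\alpha\beta$ and then, by a limiting argument, for all $\alpha\beta<1$. For the inclusion $\cF(g)\subseteq\{\alpha\beta<1\}$: that a \gf\ forces $\alpha\beta\le1$ is the density theorem for Gabor systems, so only the critical value $\alpha\beta=1$ has to be excluded, and there I would invoke the \blt. A \tp\ function with a \emph{finite} factorization \eqref{eq:t1} that is not a one-sided exponential is continuous with exponential decay, so $\int t^2|g(t)|^2\,dt<\infty$; and by \eqref{eq:t1} its \ft\ is smooth and decays fast enough that $\int\xi^2|\hat g(\xi)|^2\,d\xi<\infty$. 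The \blt\ then shows that $\cG(g,\alpha,\beta)$ is not a frame when $\alpha\beta=1$. This is exactly the point at which $\eta_\gamma$ must be excluded: for the one-sided exponential $\hat g$ decays only like $|\xi|\inv$, the \blt\ is silent, and the frame set is genuinely different.

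For the hard inclusion $\{\alpha\beta<1\}\subseteq\cF(g)$ with $\alpha\beta=p/q$ rational (in lowest terms), I would pass through the \zt: the \gfrop\ $\sgg$ is unitarily equivalent to multiplication by a continuous family of finite matrices $M(x,\omega)$ (the Zibulski--Zeevi, equivalently Ron--Shen pre-Gramian, matrices), whose entries are, up to unimodular factors, samples of $g$ along an arithmetic progression determined by $\alpha$ and $\beta$. Then \eqref{eq:7} holds \fif\ the $M(x,\omega)$ have rank $p$ with the relevant singular values bounded away from $0$ uniformly in $(x,\omega)$---which already forces $p\le q$, hence $\alpha\beta\le1$---while the upper frame bound is automatic from the exponential decay of $g$. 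The structural input is that the total positivity of $g$ translates, after the usual bookkeeping of indices and unimodular factors, into a total-positivity structure for every $M(x,\omega)$: any minor built from consecutive rows and columns is essentially a determinant $\det\big(g(u_i-v_j)\big)$ with strictly increasing $u_i$ and $v_j$, hence $\ge0$ by \eqref{eq:1}. The structure theory of totally positive matrices (Gantmacher--Krein, Karlin) then reduces the full-rank question to the non-vanishing of one concrete $p\times p$ minor of $M(x,\omega)$, and expanding that minor by the Cauchy--Binet formula and using the explicit product in \eqref{eq:t1} shows it to be strictly positive precisely when $p<q$. A compactness argument on the torus (the entries depend continuously on $(x,\omega)$) then turns this pointwise non-degeneracy into the required uniform lower bound, so $\cG(g,\alpha,\beta)$ is a frame for every rational $\alpha\beta<1$.

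For irrational $\alpha\beta<1$ I would pass from the rational case by approximation: choose rationals $(\alpha_n,\beta_n)\to(\alpha,\beta)$ with $\alpha_n\beta_n<1$ and use that the frame property, with quantitative bounds, is stable under small deformations of the lattice for windows of exponential decay; equivalently, one reformulates the question as a sampling problem in the shift-invariant space generated by $g$, where total positivity supplies the sharp sampling density with no arithmetic restriction. Either way the frame bounds survive the limit. The main obstacle is the rational case, and within it the claim that the distinguished $p\times p$ minor of $M(x,\omega)$ vanishes nowhere: total positivity delivers $\ge0$ for free, but strict positivity at \emph{every} phase $(x,\omega)$ genuinely needs the explicit form \eqref{eq:t1}. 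Obtaining uniform---not merely pointwise---frame constants (immediate on the compact torus in the rational case, but the real difficulty in the irrational limit) is the second delicate ingredient.
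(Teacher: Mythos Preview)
Theorem~\ref{old} is not proved in this paper; it is quoted from \cite{GS13,GRS18}, and the present paper only notes that its new method yields an alternative proof for \emph{rational} lattices. So the comparison is against those references and the paper's description of their methods.

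Your necessity argument via the density theorem and the Balian--Low theorem is correct and standard.

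The genuine gap is in your treatment of irrational $\alpha\beta$. Approximating $(\alpha,\beta)$ by rational $(\alpha_n,\beta_n)$ and invoking deformation stability of Feichtinger--Kaiblinger type goes the wrong way: that result says the frame set is \emph{open}, so knowing $(\alpha_n,\beta_n)\in\cF(g)$ for each $n$ tells you nothing about the limit $(\alpha,\beta)$ unless you control the lower frame bounds \emph{uniformly in $n$}. You yourself flag this as ``the real difficulty'' and then do not resolve it. Your rational-case argument, as sketched, produces constants depending on $p$ and $q$ (via the size of the Zibulski--Zeevi matrix and a compactness argument on a torus whose dimensions depend on $p,q$), with no mechanism to keep them bounded away from zero as $p,q\to\infty$. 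The paper explicitly warns that limiting arguments in this circle of problems have repeatedly ``contained a gap''.

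The actual proofs in \cite{GS13,GRS18} do not pass through rational approximation at all. For $\gamma=0$ they use the Schoenberg--Whitney conditions to obtain strict positivity of the relevant determinants directly for \emph{every} $\alpha\beta<1$; for $\gamma>0$ they use complex-analytic methods exploiting the Gaussian factor. Your alternative suggestion---recasting the question as sampling in the shift-invariant space generated by $g$---is indeed close to the route taken in \cite{GRS18}, but there it is a direct argument valid for all $\alpha\beta<1$, not a limit from the rational case; if you actually carry that out, the rational detour becomes superfluous. Finally, your Cauchy--Binet sketch for the rational case is too vague: obtaining \emph{strict} positivity of the key minor at every $(x,\omega)$ is precisely the hard step, and ``using the explicit product \eqref{eq:t1}'' does not by itself deliver it---this is where Schoenberg--Whitney (for $\gamma=0$) or complex analysis (for $\gamma>0$) must enter.
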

In other words, the lattice $\alpha \bZ \times \beta \bZ $ generates a
frame with \tp\ window $g$, \fif\ $\alpha \beta <1$.

Remarkly the proof of Theorem~\ref{old} required different methods for
the cases $\gamma =0$ (no Gaussian factor) and $\gamma >0$ (with Gaussian
factor). In the former case total positivity and the
Schoenberg-Whitney conditions were used, in the latter case only the
factorization was used in conjunction with complex analysis.

The frame set  of the one-sided exponential $\eta _\gamma $ is
slightly different, since $\eta _\gamma $  is not smooth enough and
the Balian-Low theorem does not apply~\cite{heil07}. In this case the
frame set is $\cF ( \eta _\gamma ) =  \{ (\alpha, \beta ) \in
\bR _+^2: \alpha \beta \leq 1\}$, as was already known by Janssen~\cite{janssen96}.

Theorem~\ref{old} suggests that the characterization of Gabor frames
holds for \emph{all} \tp\ generators.  Our goal is to prove the
following theorem which is a step towards the full conjecture
formulated in ~\cite{Gr14}. 

\begin{tm}\label{new}
  Assume that $g$ is totally positive other than  a one-sided exponential function
  $\eta _\gamma $. Further assume that  $\alpha \beta $ is
  rational. Then  $\gab $ is a frame for $L^2(\bR )$, \fif\ $\alpha
  \beta <1$.
\end{tm}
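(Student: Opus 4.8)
\medskip
\noindent\textbf{Proof proposal.}
I would prove the two implications separately. Necessity of $\alpha\beta<1$ is classical: $\alpha\beta\le 1$ is forced by the density theorem for Gabor frames, and $\alpha\beta=1$ is excluded by the amalgam \blt, because a \tp\ integrable function that is not a one-sided exponential has exponential decay (Schoenberg) and, by \eqref{eq:t1}, a Fourier transform decaying at least like $\xi^{-2}$ or like a Gaussian, so that both $g$ and $\hg$ are continuous and lie in the Wiener amalgam space $W(C_0,\ell^1)$. This is precisely where one-sided exponentials must be excluded, since for $\eta_\gamma$ one has $\hg(\xi)\sim\xi^{-1}$, the \blt\ fails, and indeed $\cF(\eta_\gamma)=\{(\alpha,\beta):\alpha\beta\le 1\}$. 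The substance of the theorem is therefore the implication $\alpha\beta<1\Rightarrow\gab$ is a frame, which I would obtain by using the rationality of $\alpha\beta$ to reduce the frame property to the pointwise invertibility of a finite matrix built from $g$, and then exploiting total positivity together with a spectral-invariance argument.

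So write $\alpha\beta=p/q$ with $\gcd(p,q)=1$, hence $p<q$. Since $g\in W(C_0,\ell^1)$ the upper frame bound is automatic, and by the \zt\ (Zibulski--Zeitouni) representation, valid for rational redundancy, $\gab$ is a frame \fif\ the associated $q\times p$ matrix-valued symbol $\Phi_g(x,\omega)$ — whose entries are Zak transform values of $g$ at grid points, i.e.\ exponentially convergent, hence continuous, periodizations of translates of $g$, doubly periodic in $(x,\omega)$ — has rank $p$ for every $(x,\omega)\in\bT^2$; the passage from a uniform lower bound to pointwise full rank uses continuity of the symbol and compactness of $\bT^2$, which is exactly the simplification afforded by rationality. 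Unravelling the periodizations, the kernel equation $\Phi_g(x,\omega)c=0$ becomes a homogeneous linear system whose coefficient matrix has the form $\bigl(g(u_i-v_j)\bigr)$ for explicit (finite or bi-infinite) arithmetic progressions $\{u_i\},\{v_j\}$ determined by $\alpha,\beta,x,\omega$; by total positivity every finite section of this matrix is a \tp\ matrix, and it remains to show these systems are nonsingular (equivalently, that the resulting bi-infinite matrix is bounded below on $\ell^2(\bZ)$).

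For $g$ with a finite factorization this is the situation of Theorem~\ref{old}: the Schoenberg--Whitney conditions for \tp\ matrices apply, the support structure of $g$ (a half-line when $\gamma=0$, all of $\bR$ when $\gamma>0$) governs the required interlacing exactly as there, and $p<q$ supplies precisely the density needed. For a general \tp\ $g$ I would approximate it by the window $g_M$ whose Fourier transform truncates \eqref{eq:t1} after $M$ factors: $g_M$ is \tp\ with finite factorization, $g_M\to g$ with uniformly controlled exponential decay, and $\Phi_{g_M}\to\Phi_g$ uniformly on $\bT^2$. Theorem~\ref{old} then gives that $g_M$ generates a frame, but this does not pass to the limit by itself; the quantitative input is to work inside a Banach algebra of bi-infinite matrices with exponential off-diagonal decay that is inverse-closed in $\cB(\ell^2(\bZ))$ — the relevant instance of \wl\ — so that, from the invertibility for the $g_M$ together with a compactness argument over $\bT^2$, one obtains invertibility of the (totally positive) limit matrices with a uniform bound on their inverses, hence a non-degenerate lower frame bound for $g$.

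The main obstacle, as I see it, is this last step: showing that the totally positive matrices encoding the kernel condition are nonsingular with a uniform bound on their inverses when the factorization \eqref{eq:t1} is infinite — i.e.\ pinning down the correct inverse-closed algebra so that Wiener's lemma transfers the invertibility known for finite-factorization windows to the general \tp\ case without loss of the lower frame bound.
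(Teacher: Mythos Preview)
Your proposal has a genuine gap, and you have correctly located it yourself. The approximation $g_M\to g$ by \tp\ functions with finite factorization, together with Theorem~\ref{old} for each $g_M$, does not transfer a lower frame bound to the limit: inverse-closedness (Wiener's lemma) tells you that \emph{if} the limiting operator is invertible then its inverse inherits the off-diagonal decay, but it does not supply the invertibility itself, nor does it provide a uniform lower bound on the smallest singular values of $\Phi_{g_M}(x,\omega)$ as $M\to\infty$. A sequence of full-rank matrices can converge to a rank-deficient one; compactness of $\bT^2$ only helps after you already know pointwise full rank of $\Phi_g$. The paper says explicitly that this limiting strategy was tried and that ``so far all our `proofs' contained a gap,'' so the obstacle you describe is exactly the one that blocks this route.

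The paper circumvents approximation entirely and works directly with the arbitrary \tp\ window. The new ingredients are: (i) for each $x$ one extracts from $x+\alpha\bZ$ a perturbation $\{k+\delta_k\}$ of $\bZ$ with $\delta_k$ avoiding the unique zero $(x_0,\tfrac12)$ of the Zak transform $Zg$ (this uniqueness is a recent nontrivial fact about \tp\ functions); (ii) a theorem of de~Boor--Friedland--Pinkus says that a bounded \tp\ bi-infinite matrix is \emph{onto} $\ell^\infty(\bZ)$ as soon as its range contains a uniformly alternating sequence --- and applying $G=\bigl(g(k+\delta_k-l)\bigr)$ to $c_l=(-1)^l$ gives $(Gc)_k=(-1)^k Zg(\delta_k,\tfrac12)$, which is uniformly alternating precisely because the $\delta_k$ avoid the Zak zero; (iii) spectral invariance transfers surjectivity from $\ell^\infty$ to $\ell^1$; (iv) only now does rationality enter: with $\alpha=p/q$ one can take $\delta_k$ $p$-periodic, $G$ commutes with $T_p$, and the action is encoded by a $p\times p$ matrix-valued function $A(\xi)$, so surjectivity on $\ell^1$ forces each $A(\xi)$ to be onto $\bC^p$, hence invertible, hence $G$ is one-to-one and therefore invertible. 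The Schoenberg--Whitney conditions and the finite factorization are never used.
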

Theorem~\ref{new} implies that  the frame set $\cF (g)$ is an \emph{open,
    dense } subset of  $\{ (\alpha, \beta ) \in
 \bR _+^2: \alpha \beta < 1\} $. This follows from a deformation
 result  of Feichtinger and Kaiblinger~\cite{FK04}.

Since every \tp\ function is a limit of a \tp\ with a finite
factorization, one would   hope that some kind of  limiting procedure
would work. However, so far all our ``proofs'' contained a
gap. Except for the example of the  hyperbolic secant $(e^{at}+ e^{-at})\inv
$~\cite{JS02}, Theorem~\ref{new} is the first general statement about
Gabor frames with a \tp\ generator with an  infinite factorization~\eqref{eq:t1}. 

 In
this paper we present a new approach that is independent of
Schoenberg's factorization~\eqref{eq:t1} and uses only the definition of
total positivity. The new proof idea comes from   a deep theorem of \cite{BFP82} about
invertibility of infinite \tp\ matrices. We also use methods from
spectral invariance of matrix algebras and the important fact that the Zak
transform of a \tp\ function has only one zero.  For
rational lattices this method  leads also  to a  new proof of Theorem~\ref{old}.

To finish the discussion, we mention the superb work ~\cite{BKL22},
which 
determines the frame set of
rational functions (Hurwitz functions) $g(t) = \sum _{j=1}^N
\frac{a_j}{t-iw_j}$ for $w_j\in \bC $. If $a_j >0$ and $w_j >0$, then
$\cF (g) = \{ (\alpha, \beta ) \in
 \bR _+^2: \alpha \beta \leq 1\} $, for general coefficients the frame
 set contains  $\{ (\alpha, \beta ) \in
 \bR _+^2: \alpha \beta \leq 1\} \cap \bQ ^c$. Interestingly and in 
 contrast to our work, the authors show that all irrational lattices
 belong to the frame set!

Other than this the complete frame set has been determined only for
the characteristic function of an interval  $\chi
_{[0,1]}$~\cite{sunabc} and the Haar function~\cite{DZ22}. In contrast
to the results for \tp\ windows and rational functions, the frame set
is extremely complicated and depends on subtle number theoretic
properties of the product $\alpha \beta $.

The case of irrational lattices remains open. There is some evidence
that generically a lattice $\alpha \bZ \times \beta \bZ $ with
irrational $\alpha \beta $ generates a Gabor frame. Important results
pointing in this direction are contained in~\cite{BKL22} and
in~\cite{DZ22}. In each case an underlying dynamical system and
some ergodicity are at work. Currently we do not know how this works
for \tp\ windows.

Of course, much more is known and relevant about Gabor frames. As we
are interested only in the complete characterization of Gabor frames
for a given window, we refer to the surveys~\cite{Gr10,GK19,heil07} and
monographs~\cite{book,CR20} for the general theory of Gabor frames.

The paper is organized as follows: In Section~2 we review the tools
underlying Gabor analysis. These are Zak transform methods, Banach
algebra methods concerning spectral invariance, and general 
characterizations of Gabor frames.  All these characterizations  amount to showing that
some (infinite) matrix is invertible. In our approach the matrix  $G$ under consideration
will  a sub-matrix of the so-called pre-Gramian. This is a technical
novelty. In Section~3 we show that this matrix is onto $\ell ^1
(\bZ )$ by using heavily the properties of spectral invariance and a
theorem from~\cite{BFP82}. In
Section~4 we will show that $G$ is also one-to-one. It is only here
that we will use the rationality of the lattice. Ultimately the proof
boils down to the fact that a square matrix is invertible, \fif\ it is
onto. 

\vspace{3mm}

\textbf{Acknowledgement.} I am deeply indebted to Jose Luis Romero and
Joachim St\"ockler for comments on earlier versions of this
manuscript. 

\section{Tools}

\subsection{Reduction.}  Let $g_\beta (x) = \beta ^{-1/2}
g(x/\beta )$. Then $\cG (g, \alpha \bZ \times \beta \bZ ) $ is a
frame, \fif\ $\cG (g_\beta, \alpha \beta \bZ \times  \bZ ) $ is a
frame. It is immediate from \eqref{eq:1} that the set of \tp\ functions is invariant under
dilations. Thus, 
if $g$ is totally positive, then so is $g_\beta
$. We may therefore always replace $g$ by $g_\beta $, $\alpha $ by
$\alpha \beta $ and $\beta $ by $1$ and  thus  assume without
generality that 
$\beta =1$. 

We note that we may always assume that $\alpha \beta <1 $ and, after
the reduction, that $\alpha <1,\beta =1$ because of the density theorem for Gabor
frames~\cite{heil07}. 

\subsection{Spectral invariance.}
Spectral invariance refers to the phenomenon that an invertible, infinite matrix with
sufficiently strong off-diagonal decay  possesses an inverse with the same
off-diagonal decay. 
This topic has a long  history with many contributions and
variations. See ~\cite{ABK08,Bas90,Sjo95,Sun07a,Sun08b,Tes10} for a sample of
contributions and the survey~\cite{Gr10}. The following  version can be attributed to
Shin-Sun~\cite{Sun08b} and Tessera~\cite[Thm.~1.8]{Tes10}. 

\begin{prop}[Stability Theorem]  \label{stab}
   Let $G = (G_{kl})_{k,l\in \bZ }$ be a bi-infinite matrix  with off-diagonal decay
  $$
  |G_{kl}| \leq C (1+|k-l|)^{-\sigma} \qquad k,l \in \bZ \, ,
  $$
  for some $\sigma >1$.

  (i) \textrm{Spectral invariance:} If $G$ is invertible on some $\ell ^{p_0}(\bZ )$, 
  then $G$ is invertible  on all $\ell ^p(\bZ )$ for $1\leq p\leq \infty
  $. 

  (ii) \textrm{Stability:} If $G$ is stable on some $\ell ^{p_0}(\bZ )$, i.e., if $G$ satisfies
  $$
  \|Gc\|_{p_0} \geq A \|c\|_{p_0} \qquad \text{ for all } c\in \ell
  ^{p_0}(\bZ ),
  $$
  then $G$ is stable on all $\ell ^p(\bZ )$ for $1\leq p\leq \infty
  $.
\end{prop}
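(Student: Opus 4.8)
The plan is to realize $G$ as an element of the Jaffard--Baskakov algebra and to deduce both assertions from the single fact that this algebra is inverse-closed in $\cB (\ell ^p(\bZ ))$ for \emph{every} $p$, which is the genuine content of the statement. First I would introduce
$$
\cA _\sigma = \Big\{ A = (A_{kl})_{k,l\in \bZ } : \|A\|_{\cA _\sigma } := \sup _{k,l\in \bZ } |A_{kl}|\,(1+|k-l|)^{\sigma } < \infty \Big\}
$$
and record two elementary facts, both of which use $\sigma >1$ through the summability $\sum _{j\in \bZ }(1+|j|)^{-\sigma } < \infty $. The Schur test gives the continuous embedding $\cA _\sigma \hookrightarrow \cB (\ell ^p(\bZ ))$ for all $1\leq p\leq \infty $, and the convolution-type estimate $\sum _{j}(1+|k-j|)^{-\sigma }(1+|j-l|)^{-\sigma } \leq C(1+|k-l|)^{-\sigma }$ makes $\cA _\sigma $ a Banach $*$-algebra under matrix multiplication (with $(A^*)_{kl} = \overline{A_{lk}}$). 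The hypothesis on $G$ says exactly $G\in \cA _\sigma $.

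The heart of the matter is inverse-closedness: if $G\in \cA _\sigma $ is invertible on some $\ell ^{p_0}(\bZ )$, then $G^{-1}\in \cA _\sigma $. I would prove this in two stages. For $p_0 = 2$ I would use that $\cA _\sigma $ is a symmetric, continuously and densely embedded $*$-subalgebra of the C$^*$-algebra $\cB (\ell ^2(\bZ ))$, so Hulanicki's lemma forces $\mathrm{spec}_{\cA _\sigma }(G) = \mathrm{spec}_{\cB (\ell ^2)}(G)$ and hence $G^{-1}\in \cA _\sigma $ (Jaffard's theorem). To reach arbitrary $p_0$ I would invoke that $\cA _\sigma $ is a \emph{differential subalgebra} of $\cB (\ell ^p)$, i.e.
$$
\|AB\|_{\cA _\sigma } \leq C\big( \|A\|_{\cA _\sigma }\,\|B\|_{\cB (\ell ^p)} + \|A\|_{\cB (\ell ^p)}\,\|B\|_{\cA _\sigma }\big),
$$
and run the Brandenburg trick: iterating this estimate along dyadic powers yields $\limsup _n \|G^{n}\|_{\cA _\sigma }^{1/n} \leq \rho _{\cB (\ell ^p)}(G)$, so the two spectral radii coincide; applying this to the resolvents $(\lambda - G)$ gives $\mathrm{spec}_{\cA _\sigma }(G) = \mathrm{spec}_{\cB (\ell ^p)}(G)$ for every $p$.

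Part (i) is then pure bookkeeping. Since the inclusion $\cA _\sigma \hookrightarrow \cB (\ell ^p)$ is a unital algebra homomorphism, $\mathrm{spec}_{\cB (\ell ^p)}(G)\subseteq \mathrm{spec}_{\cA _\sigma }(G)$ for all $p$; if $G$ is invertible on $\ell ^{p_0}$ then $0\notin \mathrm{spec}_{\cB (\ell ^{p_0})}(G) = \mathrm{spec}_{\cA _\sigma }(G)$, whence $G^{-1}\in \cA _\sigma \subseteq \cB (\ell ^p)$ for all $p$ and $G$ is invertible on every $\ell ^p$. For part (ii) I would first observe the transfer out of $\ell ^2$: stability on $\ell ^2$ means $G^*G\in \cA _\sigma $ is positive and bounded below, hence invertible on $\ell ^2$, so by (i) $(G^*G)^{-1}\in \cA _\sigma $ and $H:=(G^*G)^{-1}G^*\in \cA _\sigma $ is a left inverse with $HG = I$; then $\|c\|_p = \|HGc\|_p \leq \|H\|_{\cB (\ell ^p)}\|Gc\|_p$ gives a uniform lower bound on every $\ell ^p$. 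The one remaining point—that stability on an arbitrary $\ell ^{p_0}$ already implies stability on $\ell ^2$—follows by applying the same differential-subalgebra/Brandenburg machinery to the lower-bound quantity rather than to the resolvent, which is precisely the Shin--Sun refinement.

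The main obstacle is the passage to $p\neq 2$: the symmetric-algebra (Wiener/Hulanicki) argument is intrinsically Hilbertian, so it is the differential-subalgebra estimate together with the Brandenburg iteration that actually forces the spectrum, and likewise the stability constant, to be independent of $p$. Establishing that inequality and controlling the dyadic growth of $\|G^{n}\|_{\cA _\sigma }$ is the technical core; everything else reduces, via the spectral-inclusion bookkeeping above, to this single estimate and to the summability that is exactly where the assumption $\sigma >1$ is consumed.
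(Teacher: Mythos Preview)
The paper does not actually prove this proposition: it is stated as a known result and attributed to Shin--Sun~\cite{Sun08b} and Tessera~\cite{Tes10}, with further references to the survey~\cite{Gr10}. There is therefore no proof in the paper against which to compare your attempt.

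That said, your sketch is essentially the standard route to results of this type and is in line with what one finds in the cited literature. The Jaffard--Baskakov algebra $\cA_\sigma$, its Banach-$*$-algebra structure from the convolution estimate, inverse-closedness in $\cB(\ell^2)$ via Hulanicki, and the differential-subalgebra/Brandenburg iteration to pass to general $p$ are all correct and well-known ingredients. Your handling of part~(ii) is also right in spirit: once stability on $\ell^2$ is in hand, the left-inverse $(G^*G)^{-1}G^*\in\cA_\sigma$ transfers the lower bound to every $\ell^p$. The only place your outline is a bit loose is the final reduction ``stability on $\ell^{p_0}$ implies stability on $\ell^2$'': this does not follow simply by ``applying Brandenburg to the lower-bound quantity,'' and in the Shin--Sun and Tessera papers it requires a separate localization/commutator argument (finite-section or Sj\"ostrand-type estimates) rather than a direct spectral-radius computation. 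If you intend to write this out in full, that step is where the genuine work lies; otherwise, citing \cite{Sun08b,Tes10} for part~(ii) as the paper does is the honest option.
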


We will need a slight variation of the stability (ii) for surjective
maps.

\begin{prop} \label{surjlem}
  Let $G = (G_{kl})_{k,l\in \bZ }$ with off-diagonal decay
  $$
  |G_{kl}| \leq C (1+|k-l|)^{-\sigma} \qquad k,l \in \bZ \, ,
  $$
  for some $\sigma >1$. If $G$ maps $\ell ^\infty (\bZ )$ onto $\ell
  ^\infty (\bZ )$, then $G$ also maps   $\ell ^1 (\bZ )$ onto $\ell
  ^1 (\bZ )$
   \end{prop}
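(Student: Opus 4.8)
The plan is to deduce this from the Stability Theorem (Proposition~\ref{stab}) together with a duality argument. The key point is that surjectivity of $G$ on $\ell^\infty$ should be equivalent, via adjoints, to stability (injectivity with closed range) of the adjoint matrix $G^* = (\overline{G_{lk}})_{k,l}$ on $\ell^1$; and the adjoint $G^*$ enjoys exactly the same off-diagonal decay as $G$, so Proposition~\ref{stab}(ii) applies to it.

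\medskip

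First I would make the duality precise. Since $G$ has off-diagonal decay with $\sigma>1$, it defines a bounded operator on every $\ell^p(\bZ)$, $1\le p\le\infty$, and on $c_0(\bZ)$; moreover $G$ acting on $\ell^1$ and $G^*$ acting on $\ell^\infty$ (or on $c_0$ with values in $\ell^1$) are mutually adjoint in the dual pairing $\langle \ell^1, \ell^\infty\rangle$. By the closed range theorem, $G\colon \ell^\infty\to\ell^\infty$ being onto forces its preadjoint $G^*\colon \ell^1\to\ell^1$ to be bounded below, i.e.\ stable on $\ell^1$: $\|G^*c\|_1 \ge A\|c\|_1$ for all $c\in\ell^1$. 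Here one must be slightly careful because $\ell^\infty$ is not reflexive; the clean way is to note that $G\colon \ell^\infty\to\ell^\infty$ onto implies $G\colon c_0\to c_0$ has dense range is \emph{not} what we want --- instead use that surjectivity of $G$ on $\ell^\infty$ gives, by the open mapping theorem, a constant so that every $b\in\ell^\infty$ has a preimage $c$ with $\|c\|_\infty \le C'\|b\|_\infty$, and then test against finitely supported sequences to conclude $\|G^* d\|_1 \ge (C')^{-1}\|d\|_1$ for all finitely supported $d$, hence for all $d\in\ell^1$ by density.

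\medskip

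Next, since $|{(G^*)}_{kl}| = |G_{lk}| \le C(1+|k-l|)^{-\sigma}$ with the same $\sigma>1$, Proposition~\ref{stab}(ii) applies to $G^*$: stability of $G^*$ on $\ell^1$ propagates to stability of $G^*$ on all $\ell^p$, in particular on $\ell^2$. A matrix with $\sigma>1$ off-diagonal decay on $\ell^2$ that is bounded below is automatically invertible on $\ell^2$ (its range is closed, and applying the same reasoning to $G$ --- which is onto $\ell^\infty$ hence, one checks, onto $\ell^2$ as well, or alternatively applying Proposition~\ref{stab}(i) after noting $G^*$ invertible on $\ell^2$ forces $G$ invertible on $\ell^2$). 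So $G^*$ is invertible on $\ell^2$, and by spectral invariance, Proposition~\ref{stab}(i), $G^*$ is invertible on every $\ell^p$, in particular on $\ell^1$. Dualizing back, $G$ is then invertible on $\ell^\infty$ and on $c_0$; and invertibility of $G^*$ on $\ell^1$ (equivalently, by taking adjoints once more, invertibility of $G$ on $\ell^\infty = (\ell^1)^*$) yields that $G$ maps $\ell^1$ \emph{onto} $\ell^1$: indeed if $G^*$ is invertible on $\ell^1$ with inverse $H$ satisfying the same decay, then $H^*$ is a bounded operator on $\ell^\infty$ and on $\ell^1$, and $G H^* = (H G^*)^* = I$ on $\ell^1$, exhibiting $H^*\colon\ell^1\to\ell^1$ as a right inverse of $G$; since $H^*$ also has $\sigma>1$ decay it maps $\ell^1$ into $\ell^1$, so $G(\ell^1) \supseteq G H^*(\ell^1) = \ell^1$.

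\medskip

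The main obstacle I anticipate is the non-reflexivity of $\ell^\infty$ in the duality step: one cannot simply say ``$G$ onto $\ell^\infty$ iff $G^*$ bounded below on $\ell^1$'' by citing the closed range theorem for the pairing $(\ell^1)^* = \ell^\infty$, because the roles are reversed from the usual setup. The correct bookkeeping is to take $G$ on $\ell^\infty$ to be the Banach-space adjoint of the operator $G^*$ acting on $\ell^1$ (this identification is valid precisely because of the off-diagonal decay, which guarantees $G^*$ maps $\ell^1\to\ell^1$ boundedly and its adjoint on $(\ell^1)^*=\ell^\infty$ is given by the matrix $G$). Then the closed range theorem in the form ``$T^*$ surjective $\Rightarrow$ $T$ bounded below'' applies with $T = G^*$ on $\ell^1$, giving exactly the stability of $G^*$ on $\ell^1$ that feeds into Proposition~\ref{stab}. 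Everything after that is a routine combination of Propositions~\ref{stab}(i) and~(ii) with the decay of the matrix inverse.
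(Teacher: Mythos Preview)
Your overall strategy coincides with the paper's: view $G$ on $\ell^\infty$ as the Banach adjoint of $G^*:\ell^1\to\ell^1$, apply the closed range theorem to get $\|G^*c\|_1\ge A\|c\|_1$, then use Proposition~\ref{stab}(ii) to propagate this stability to all $\ell^p$. You also correctly diagnose and resolve the non-reflexivity issue in your final paragraph.

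The gap is in your middle paragraph, where you try to upgrade stability of $G^*$ on $\ell^2$ to \emph{invertibility} of $G^*$ on $\ell^2$. The claim ``a matrix with off-diagonal decay that is bounded below on $\ell^2$ is automatically invertible on $\ell^2$'' is not true in general, and your justification is circular: you appeal to $G$ being onto $\ell^2$ (which does follow from $G^*$ bounded below on $\ell^2$, by Hilbert-space duality), but surjectivity of $G$ says nothing about surjectivity of $G^*$; and your alternative ``noting $G^*$ invertible on $\ell^2$ forces $G$ invertible on $\ell^2$'' assumes exactly what you are trying to prove. The subsequent machinery (spectral invariance, explicit right inverse $H^*$) then rests on this unproved invertibility.

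The paper avoids this detour entirely. After obtaining $\|G^*c\|_\infty\ge A'\|c\|_\infty$ from Proposition~\ref{stab}(ii), it simply applies the closed range theorem a second time, now with $T=G:\ell^1\to\ell^1$ and $T^*=G^*:\ell^\infty\to\ell^\infty$: since $T^*$ is one-to-one with closed range, $T$ is onto. No invertibility on $\ell^2$ is needed, and no inverse matrix is ever constructed. Your argument is easily repaired by making exactly this move in place of the $\ell^2$ detour.
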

   \begin{proof}
     Note that the off-diagonal decay implies that $G$ defines a
     bounded operator on all $\ell ^p(\bZ ), 1\leq p\leq \infty $.
     
     By the closed range theorem~\cite[p.~102]{rudin73}, the adjoint operator
     (matrix) on the pre-dual $G^*: \ell ^1(\bZ ) \to \ell
     ^1(\bZ) $ is one-to-one with closed range in $\ell ^1(\bZ )$.
     Consequently, $G^*$ satisfies an inequality of the form
     \begin{equation}
       \label{eq:2}
     \|G^* c\|_1 \geq A \|c\|_1 \qquad \text{ for all } c\in \ell ^1(\bZ )  
     \end{equation}
      with a positive constant $A>0$.

The stability part of Proposition~\ref{stab} then implies that \eqref{eq:2}
     holds for all $p$-norms, in particular, for some $A'>0$ we have 
     $$
     \|G^* c\|_\infty \geq A' \|c\|_\infty  \qquad c\in \ell ^\infty
     (\bZ ) \, .
     $$
Since $G^*$ is one-to-one with norm-closed range in $\ell
     ^\infty (\bZ ) $, we conclude that 
     $G:  \ell ^1(\bZ ) \to \ell
     ^1(\bZ) $ must be onto $\ell ^1(\bZ )$. 
   \end{proof}

\subsection{Characterizations of Gabor frames.}

We use the following characterization of Gabor frames that goes back
to Janssen~\cite{janssen95} and the
duality theory of Gabor frames~\cite{ron-shen97}. This is one of the
most successful criteria to verify that a Gabor system over a
rectangular lattice generates a frame and has been applied many
times.  We refer to~\cite{GK19} for a survey of the known characterizations of Gabor
frames and some of their uses.

\begin{tm}\label{thm:RSLambda}
Assume that  $g: \bR \to \bR $ is continuous and satisfies the
condition  $\sum
_{k\in \bZ } \sup _{x\in [0,1]} |g(x+k)| < \infty $. 
Then the following are equivalent.
\begin{itemize}
\item[(i)]
The family $\cG(g, \alpha \bZ \times \bZ)$  is a frame for $L^2(\bR)$.

\item[(ii)] 
  For every $x \in [0,1)$, there
exist $A_x,B_x>0$
such that
\begin{equation}
\label{eq:samplingset2}
A_x\|c\|_2^2 \leq \sum_{j\in \bZ}
\Big| \sum_{k \in \bZ} c_k g(x+ \alpha j-k) \Big|^2
  \leq B_x \|c\|_2^2
  \qquad \mbox{for all } c  \in \ell^2(\bZ ).
\end{equation}

\item[(iii)]  There exist $A,B>0$ such that, for all $ x\in [0,1)$,
\begin{equation*}
A\|c\|_2^2 \leq \sum_{j\in \bZ }
\Big| \sum_{k \in \bZ} c_k g(x+\alpha j -k) \Big|^2
  \leq B \|c\|_2^2
    \qquad\mbox{for all } c  \in \ell^2(\bZ ).
\end{equation*}
\end{itemize}
\end{tm}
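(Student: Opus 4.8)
The plan is to obtain the three equivalences from the standard fiberization of the frame inequality over the frequency torus, available here because the modulations sit on the integer lattice. First, for $f\in L^2(\bR)$ I would introduce the periodized fibers $c_n(x):=f(x+n)$, $x\in[0,1)$, $n\in\bZ$; then $f\mapsto c=(c_n)_n$ is a unitary map of $L^2(\bR)$ \emph{onto} $L^2\big([0,1);\ell^2(\bZ)\big)$, with $\|f\|_2^2=\int_0^1\|c(x)\|_{\ell^2}^2\,dx$. To each $x$ associate the fiber matrix $A_x=\big(g(x+\alpha j-k)\big)_{j,k\in\bZ}$. A Schur test on the rows and the columns of $A_x$, using the hypothesis $\sum_k\sup_{[0,1]}|g(\cdot+k)|<\infty$ (the column sums being finite because the points $\alpha k$ meet each unit interval $O(1/\alpha)$ times), shows that $A_x$ is bounded on $\ell^2(\bZ)$ uniformly in $x$. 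Hence the upper (Bessel) inequalities in (ii) and (iii) hold automatically and only the lower bounds carry content.

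Next I would run the classical computation turning Gabor coefficients into Fourier coefficients of periodizations. Since $g$ is real, $\langle f,M_lT_{\alpha k}g\rangle=\int_{\bR}f(t)\,g(t-\alpha k)\,e^{-2\pi i lt}\,dt$; splitting this integral over unit intervals shows it equals $\widehat{\Phi_k}(l)$, the $l$-th Fourier coefficient of the $1$-periodic function $\Phi_k(x):=\sum_n f(x+n)\,g(x+n-\alpha k)$. The uniform boundedness of the fibers places each $\Phi_k$ in $L^2([0,1))$ and makes every interchange of summation and integration legitimate, so Parseval on $[0,1)$ and Tonelli give
$$
\sum_{k,l\in\bZ}\big|\langle f,M_lT_{\alpha k}g\rangle\big|^2=\int_0^1\sum_{k\in\bZ}\Big|\sum_{n\in\bZ}c_n(x)\,g(x+n-\alpha k)\Big|^2\,dx .
$$
The matrix $\big(g(x+n-\alpha k)\big)_{k,n}$ is carried onto $A_x$ by the unitary reindexing $j\mapsto-j$, $k\mapsto-k$ of the two copies of $\ell^2(\bZ)$ (no reflection of $g$ is required), so the right-hand side equals $\int_0^1\|A_xc(x)\|_{\ell^2}^2\,dx$. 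Because $f\mapsto c$ is \emph{onto} $L^2([0,1);\ell^2(\bZ))$, statement (i) is thus equivalent to the direct-integral estimate: there exist $A,B>0$ with $A\!\int_0^1\!\|h(x)\|^2\,dx\le\int_0^1\!\|A_xh(x)\|^2\,dx\le B\!\int_0^1\!\|h(x)\|^2\,dx$ for all $h\in L^2([0,1);\ell^2(\bZ))$.

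It then remains to trade this integrated estimate for the pointwise statements (ii) and (iii). By a measurable-selection argument, the direct-integral lower bound with constant $A$ holds if and only if $\|A_xc\|\ge\sqrt A\,\|c\|$ for all $c\in\ell^2(\bZ)$ and almost every $x\in[0,1)$ (one direction is trivial; the other selects a measurable unit vector field on a hypothetical bad set), and likewise for the upper bound. The continuity of $g$ and the summability hypothesis make $x\mapsto A_x$ continuous in operator norm (in effect because translation acts continuously on the amalgam space singled out by the hypothesis), and $A_{x+1}$ equals $A_x$ followed by a coordinate shift; hence $\varphi(x):=\inf_{\|c\|=1}\|A_xc\|$ is continuous and $1$-periodic. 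A continuous function that is $\ge\sqrt A$ almost everywhere is $\ge\sqrt A$ everywhere, which upgrades the a.e.\ bounds to (iii); and a continuous, periodic $\varphi$ that is strictly positive at every point of $[0,1)$, as (ii) asserts, attains a positive minimum, which yields (iii) from (ii). Combined with the trivial implication (iii)$\Rightarrow$(ii), this closes the cycle of equivalences.

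The main difficulty I expect is technical rather than structural: executing the Fourier-series step with full rigour --- the $L^2$-membership of the periodizations $\Phi_k$, all the interchanges, and the exact matching of the fiber matrix with $A_x$ --- and pinning down the measurable-selection / direct-integral correspondence, while tracking the role of each hypothesis on $g$: the summability for uniform boundedness and operator-norm continuity of the fibers, and the continuity of $g$ for the passage from ``almost every $x$'' to ``every $x$'', which is precisely what makes the statement about every $x\in[0,1)$ (rather than almost every $x$) correct.
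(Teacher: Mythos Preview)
The paper does not supply its own proof of this theorem; it is quoted as a known characterization going back to Janssen and Ron--Shen and the reader is referred to the literature (in particular the survey~\cite{GK19}). Your fiberization argument is exactly the standard route to this result and is correct as outlined: the identity
\[
\sum_{k,l}\bigl|\langle f,M_lT_{\alpha k}g\rangle\bigr|^2=\int_0^1\|A_xc(x)\|_{\ell^2}^2\,dx
\]
via Parseval on the periodizations $\Phi_k$, the Schur bound on $A_x$ (row sums controlled by the amalgam norm, column sums by the same norm times $O(1/\alpha)$), the passage from the integrated inequality to the a.e.\ pointwise one, and the upgrade from a.e.\ to all $x$ using that $x\mapsto A_x$ is norm-continuous (translation is continuous on $W(C,\ell^1)$) and that $\varphi(x)=\inf_{\|c\|=1}\|A_xc\|$ is $1$-periodic --- all of this is sound. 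There is nothing to compare against in the paper itself; you have simply reconstructed the classical proof that the cited references contain.
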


The matrix $P(x) = \big(g(x+\alpha j - k)\big)_{j,k\in \bZ} $ is often 
called the pre-Gramian of the Gabor family.

Using spectral invariance, we deduce the following sufficient
condition for Gabor frames. 

\begin{prop}\label{charcis}
  Let $g $ 
  be a function on $\bR $ with polynomial
  decay $|g(t)| \leq C (1+|t|)^{-\sigma }$ for some $\sigma >1$.  
  Assume that for every $x\in \bR $ the set  $x+\alpha \bZ $  contains a subset $\{k+\delta
  _k: k\in \bZ \}$ with the perturbation $\delta _k = \delta _k(x)$
  depending on $x$,  such that the matrix $G$ with entries $G_{kl} =
  g(k+\delta _k - l)$ is invertible on $\ell ^\infty (\bZ )$. Then
  $\cG (g, \alpha ,1)$ is a Gabor frame. 
\end{prop}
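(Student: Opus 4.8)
The plan is to deduce Proposition~\ref{charcis} from Theorem~\ref{thm:RSLambda}, using the pre-Gramian characterization (ii)/(iii) together with the Stability Theorem (Proposition~\ref{stab}). The key observation is that the quantity $\sum_{j\in\bZ}|\sum_{k}c_k g(x+\alpha j-k)|^2 = \|P(x)c\|_2^2$, where $P(x)=\big(g(x+\alpha j-k)\big)_{j,k\in\bZ}$ is the pre-Gramian, so establishing the frame property is equivalent to establishing a two-sided stability estimate for $P(x)$ on $\ell^2(\bZ)$, uniformly in $x\in[0,1)$. First I would record that the polynomial decay $|g(t)|\le C(1+|t|)^{-\sigma}$ with $\sigma>1$ guarantees $\sum_k\sup_{x\in[0,1]}|g(x+k)|<\infty$, so (after noting continuity of $g$, which in our application holds since totally positive $L^1$ functions are continuous) Theorem~\ref{thm:RSLambda} applies; it remains to verify condition (ii), i.e.\ that $P(x)$ is bounded below (and above) on $\ell^2$ for each fixed $x$.

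Next I would fix $x\in\bR$ and exploit the hypothesis: by assumption $x+\alpha\bZ$ contains a subset $\{k+\delta_k:k\in\bZ\}$ such that the matrix $G$ with $G_{kl}=g(k+\delta_k-l)$ is invertible on $\ell^\infty(\bZ)$. The rows of $G$ are a subset of the rows of $P(x)$ (each row of $P(x)$ is indexed by some point $x+\alpha j$, and $G$ picks out those rows corresponding to the points $k+\delta_k$), so for any $c\in\ell^2(\bZ)$ we have $\|P(x)c\|_2^2\ge \sum_k|(Gc)_k|^2 = \|Gc\|_2^2$. Thus it suffices to bound $G$ below on $\ell^2(\bZ)$. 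Now $G$ also has off-diagonal decay of order $\sigma>1$: indeed $|G_{kl}|=|g(k+\delta_k-l)|\le C(1+|k+\delta_k-l|)^{-\sigma}$, and since the points $k+\delta_k$ are a subset of the translate $x+\alpha\bZ$ they are separated, so $|k+\delta_k-l|\ge c_0|k-l|-c_1$ for constants depending only on $\alpha$; hence $|G_{kl}|\le C'(1+|k-l|)^{-\sigma}$. Therefore $G$ falls under Proposition~\ref{stab}: being invertible on $\ell^\infty(\bZ)$, it is invertible on every $\ell^p(\bZ)$, in particular on $\ell^2(\bZ)$, so $\|Gc\|_2\ge A_x\|c\|_2$ for some $A_x>0$. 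The upper bound $\|P(x)c\|_2\le B_x\|c\|_2$ is automatic from the off-diagonal decay of $P(x)$ (which inherits polynomial decay of order $\sigma>1$ from $g$ via the separation of $\alpha\bZ$), so condition (ii) of Theorem~\ref{thm:RSLambda} holds, and consequently $\cG(g,\alpha,1)$ is a Gabor frame.

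The main subtlety — and the step I would be most careful about — is the passage from ``invertible on $\ell^\infty$ for each $x$'' to the \emph{uniform} lower bound required in condition (iii) of Theorem~\ref{thm:RSLambda}. However, this is precisely where the structure of the characterization helps: Theorem~\ref{thm:RSLambda} asserts that (ii), the pointwise-in-$x$ stability, already implies (i), the frame property, and (iii), the uniform version — so I only need the pointwise statement, and uniformity comes for free from the equivalence. One should double-check that the constants $B_x$ in the upper bound can be taken uniform (they can, since the decay constant $C$ and $\sigma$ are independent of $x$), but the lower bound $A_x$ is allowed to depend on $x$ in condition (ii). A secondary point to verify carefully is that the separation estimate $|k+\delta_k-l|\ge c_0|k-l|-c_1$ genuinely holds: since $\{k+\delta_k\}\subset x+\alpha\bZ$ and distinct elements of $x+\alpha\bZ$ differ by at least $\alpha$, the map $k\mapsto k+\delta_k$ is strictly monotone with gaps $\ge\alpha$, which yields the bilipschitz comparison with $|k-l|$ and hence the claimed off-diagonal decay of $G$ needed to invoke Proposition~\ref{stab}.
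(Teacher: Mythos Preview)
Your proof is correct and follows essentially the same route as the paper: spectral invariance (Proposition~\ref{stab}(i)) transfers invertibility of $G$ from $\ell^\infty$ to $\ell^2$, the row inclusion $\{k+\delta_k\}\subset x+\alpha\bZ$ gives $\|P(x)c\|_2 \ge \|Gc\|_2 \ge A_x\|c\|_2$, and condition~(ii) of Theorem~\ref{thm:RSLambda} then yields the frame property. You are in fact more explicit than the paper about verifying the summability hypothesis of Theorem~\ref{thm:RSLambda} and the off-diagonal decay of $G$ (for the latter, note that the cleanest argument is simply that the $\delta_k$ are uniformly bounded --- as they always are in the application via Lemma~\ref{lem7} --- rather than the separation argument you sketch, which compares $|(k+\delta_k)-(l+\delta_l)|$ rather than the needed $|(k+\delta_k)-l|$).
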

\begin{proof}
  The decay of $g$ implies that $G$ has sufficient off-diagonal decay
  to apply  Proposition~\ref{stab}(i). Thus  spectral invariance
  asserts that the invertibility of $G$ on $\ell
  ^\infty (\bZ )$ implies that $G$ is also invertible on $\ell ^2(\bZ
  )$. Since $x+ \alpha \bZ \supseteq \{k+\delta _k: k\in \bZ \}$, we
  have 
  \begin{align*}
    \sum _{j\in \bZ } \big| \sum _{l\in \bZ } c_l g(x+\alpha j -l)|^2
&    \geq \sum _{k\in \bZ } \big| \sum _{l\in \bZ } c_l g(k+\delta _k -l)|^2\\
&= \|Gc\|_2^2 \geq A_x \|c\|_2^2 \, .     
  \end{align*}
  The upper bound in~\eqref{eq:samplingset2}  always holds due to the
  decay of $g$.  This holds for all $x$, therefore the condition (ii) of
Theorem~\ref{thm:RSLambda} implies that $\cG (g,\alpha, 1)$ is a frame. 
\end{proof}

Proposition~\ref{charcis} says that a suitable submatrix of the
pre-Gramian $P(x)$ is invertible. Proposition~\ref{charcis}  is related to the characterizations via
sampling in shift-invariant spaces that figure prominently
in~\cite{GRS18}. (In an equivalent formulation,  every set
$x+\alpha \bZ $ contains a perturbation of $\bZ $ that is
interpolating in the shift-invariant space associated to $g$.)

To prove the main theorem (Theorem~\ref{new}), we will verify the conditions of
Proposition~\ref{charcis}. 

\subsection{The Zak transform.} We recall the definition of the Zak transform  $Zg(x,\xi)=
\sum_{k\in\bZ} g(x-k)e^{2\pi i  k\xi}$ of a function  $g\in L^1(\bR
)$. More generally, the Zak transform with period $p>0$ is defined by 
$$
Z_pg(x,\xi)=
\sum_{k\in\bZ} g(x-pk)e^{2\pi i  pk\xi} \, .
$$

The Zak transform is $\tfrac{1}{p}$-periodic in the second variable
and quasi-periodic in the first variable, i.e., for all $x,\xi \in \bR $
$$
Z_pf\big(x+pk, \xi + \tfrac{l}{p}\big) = e^{2\pi i pk \xi } Z_pg(x,\xi ) \, .
$$


Zak transforms of \tp\ function possess only few zeros. This important
property was discovered only recently~\cite{KS14,Klo15,UV17} and is crucial for our
arguments.
\begin{prop}
The  Zak transform of a \tp\ function $g$  possesses a
unique zero in $[0,1)^2$ that is assumed  at  $(x_0,1/2)$ for some $x_0\in [0,1)$.
\end{prop}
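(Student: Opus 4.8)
The plan is to establish two claims: first, that the zero set of $Z_pg$ is confined to the "frequency line" $\xi \equiv 1/2 \pmod{1/p}$, and second, that on that line there is exactly one zero modulo the lattice. I would work with $p=1$ without loss of generality (by the quasi-periodicity and the dilation-invariance of total positivity). The key input is the total positivity of $g$: for a fixed $\xi$, consider the bi-infinite sequence of translates $(g(x-k))_{k\in\bZ}$ and the exponential weights $e^{2\pi i k\xi}$. The Zak transform $Zg(x,\xi)$ is, up to a quasi-periodic factor, a "theta-like" exponential sum, and a determinantal/variation-diminishing argument for totally positive functions controls how many sign changes or zeros such sums can have.

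Concretely, I would proceed as follows. Fix $\xi \in [0,1)$ and view $\phi_\xi(x) = Zg(x,\xi)$ as a function of $x$ on a period. Using the Schoenberg–Whitney-type theory (variation diminishing property of the kernel $g(x-y)$) one shows that the number of zeros of $x \mapsto Zg(x,\xi)$ in $[0,1)$ is at most the number of sign changes in the coefficient sequence $(e^{2\pi i k\xi})_k$ — but for real $\xi \ne 1/2$ these coefficients are genuinely complex, so one instead argues via the real and imaginary parts, or reduces to the classical fact that for $\xi \ne 1/2 \pmod 1$ the function $Zg(\cdot,\xi)$ is zero-free because the relevant Toeplitz-type operator is invertible. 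The cleanest route: the Zak transform is continuous and nowhere zero on $[0,1)\times([0,1)\setminus\{1/2\})$ because otherwise one could extract, from a hypothetical zero $(x_0,\xi_0)$ with $\xi_0 \ne 1/2$, a finite totally positive matrix $\big(g(x_j - y_k)\big)$ whose determinant is forced to be strictly negative — contradicting~\eqref{eq:1}. This is the standard mechanism behind the results of~\cite{KS14,Klo15,UV17}, and I would cite and adapt it.

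For the second claim — uniqueness of the zero on the line $\xi=1/2$ — I would again use a determinant argument: along $\xi = 1/2$ the Zak transform becomes the alternating sum $Zg(x,1/2) = \sum_k (-1)^k g(x-k)$, a real-valued $2$-(quasi)periodic function of $x$. A sign change of this alternating sum between consecutive points $x, x'$ would, by Laplace expansion, produce a totally positive $2\times 2$ (or larger) determinant with the wrong sign unless the number of such sign changes is exactly one per period; total positivity forces the alternating sum to change sign exactly once on a fundamental domain (it cannot be identically zero because $g\in L^1$ and is not a one-sided exponential, hence $Zg$ is not identically zero, and $\int_0^1 Zg(x,1/2)\,e^{\pi i x}\,dx$ is a nonzero Fourier-type integral). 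Hence there is exactly one $x_0\in[0,1)$ with $Zg(x_0,1/2)=0$, and it is a simple zero.

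The main obstacle will be the first claim — ruling out zeros off the line $\xi = 1/2$ — since for complex coefficient sequences the variation-diminishing property does not directly bound complex zeros, and one must carefully set up the right finite submatrix of translates whose determinant total positivity controls. I expect to handle this by fixing a putative zero $(x_0,\xi_0)$, truncating the series $Zg(x_0,\xi_0)=\sum_k g(x_0-k)e^{2\pi i k\xi_0}=0$ to a long finite sum, and recognizing the truncation as (a limit of) $\det$ of an $n\times n$ totally positive matrix of the form $\big(g(x_0 + s_j - k)\big)$ for a suitable choice of nodes $s_j$ forcing the exponential phases; total positivity gives a definite sign, and letting $n\to\infty$ contradicts the vanishing unless $e^{2\pi i\xi_0} = -1$. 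This is exactly the delicate computation I would not grind through here but would organize around the Schoenberg factorization~\eqref{eq:t1} as a sanity check (the factor $e^{-\gamma\xi^2}\prod(1+2\pi i\nu_j\xi)^{-1}$ shows $\widehat g$ has no real zeros, which is consistent with, and in the finite case implies, the zero-free-ness off $\xi=1/2$).
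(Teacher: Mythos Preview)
The paper does not prove this proposition at all: it is stated with the preamble ``This important property was discovered only recently~\cite{KS14,Klo15,UV17}'' and is simply quoted from those references. So there is no in-paper proof to compare your attempt to; the intended ``proof'' is a citation.

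That said, your sketch has a genuine gap precisely where you flag the ``main obstacle.'' The variation-diminishing property of a totally positive kernel bounds sign changes of \emph{real} linear combinations $\sum_k c_k g(x-k)$, and it does not directly control zeros of the complex-valued function $x\mapsto Zg(x,\xi)$ for $\xi\notin\{0,1/2\}$. Your proposed workaround --- choosing real nodes $s_j$ so that the finite matrix $\big(g(x_0+s_j-k)\big)$ somehow ``forces the exponential phases'' $e^{2\pi i k\xi_0}$ and yields a determinant of the wrong sign --- is not a mechanism I can see how to make work: real shifts of the $x$-variable cannot encode a complex geometric weight in the $k$-variable, and there is no evident Laplace expansion that turns a truncation of $\sum_k g(x_0-k)e^{2\pi i k\xi_0}$ into a single totally positive minor. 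The same vagueness afflicts your uniqueness argument on the line $\xi=1/2$: ``a $2\times 2$ determinant with the wrong sign'' is asserted but not exhibited, and the sign-change count for the alternating series $\sum_k(-1)^k g(x-k)$ does not follow from~\eqref{eq:1} by a one-line determinant trick.

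The published proofs proceed differently. For totally positive functions of finite type, \cite{KS14} computes $Zg$ essentially explicitly via the factorization~\eqref{eq:t1} (through exponential B-splines) and locates the unique zero directly. For general $g$, \cite{Klo15} approximates by finite-type totally positive functions and invokes Hurwitz's theorem to pass the zero structure to the limit; this is where the analyticity of $Zg$ in $\xi$ (or in a complexified variable) does the work that your determinantal argument cannot. If you want a self-contained proof, that approximation-plus-Hurwitz route is the one to emulate, not a direct minor argument.
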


\section{Surjectivity.} 
For the proof of Theorem~\ref{new} we verify the  sufficient condition of
Proposition~\ref{charcis} for a \tp\ function $g$. First we prove the
surjectivity. Notice that the  arguments in this section  hold for arbitrary values of
$\alpha \in (0,1)$.

\begin{prop} \label{surj}
Let $g$ be an arbitrary \tp\ function in $L^1(\bR )$ and assume that
its Zak transform has its unique zero at $(x_0,1/2)$.    Let $\epsilon
\in (0,1/2)$, $M\in \bZ $,    and  $(\delta _k) _{k\in \bZ } \subseteq [x_0+M-1+\epsilon ,
  x_0+M-\epsilon ] $ be a sequence of perturbations. Then the matrix $G$
  with entries 
  $G_{kl} = g(k+\delta _k-l), k,l\in \bZ $,  maps $\ell ^\infty (\bZ
  )$ \emph{onto} $\ell ^\infty (\bZ   )$ and $\ell ^1(\bZ )$
  \emph{onto} $\ell ^1(\bZ )$. 
\end{prop}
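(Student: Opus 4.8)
The plan is to reduce the surjectivity of $G$ on $\ell^1(\bZ)$ to the surjectivity on $\ell^\infty(\bZ)$ via Proposition~\ref{surjlem}, so the real work is to show $G:\ell^\infty(\bZ)\to\ell^\infty(\bZ)$ is onto. First I would record that the off-diagonal decay hypothesis of Proposition~\ref{surjlem} is met: since an integrable \tp\ function has exponential decay \cite{sch51}, and $\delta_k$ ranges over a \emph{bounded} interval, we have $|G_{kl}|=|g(k+\delta_k-l)|\le C(1+|k-l|)^{-\sigma}$ for every $\sigma>1$. So once onto-ness on $\ell^\infty$ is established, Proposition~\ref{surjlem} immediately gives onto-ness on $\ell^1$, and the proposition is proved.

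For the surjectivity on $\ell^\infty$, the strategy is to invoke the deep theorem of de Boor--Fix--Pollen \cite{BFP82} on invertibility of bi-infinite \tp\ (band-dominated) matrices. The matrix $G$ is totally positive up to a sign normalization: the rows of $G$ are of the form $g(k+\delta_k-\cdot)$ evaluated on the integers, and because the ``sample points'' $x_j=k+\delta_k$ and ``knots'' $y_l=l$ are each strictly increasing (the $\delta_k$ lie in a fixed sub-interval of length $1-2\epsilon<1$ shifted by $k$, so $k+\delta_k<k+1+\delta_{k+1}$), every finite minor $\det\big(g(x_{j}-y_{l})\big)$ is $\ge 0$ by the definition \eqref{eq:1} of total positivity. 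The key point I would extract from \cite{BFP82} is that such a matrix is boundedly invertible on $\ell^\infty$ as soon as it is \emph{not} degenerate, and the relevant nondegeneracy is controlled precisely by the zero of the Zak transform: the classical Schoenberg--Whitney-type interlacing condition for invertibility of a \tp\ collocation matrix asks that each sample point $x_k=k+\delta_k$ be ``properly positioned'' relative to the support/knot pattern, and the constraint $\delta_k\in[x_0+M-1+\epsilon,\,x_0+M-\epsilon]$ is exactly what keeps the sample points away from the bad configuration encoded by the Zak zero at $(x_0,1/2)$. I would show that with the sample point $k+\delta_k$ confined to this interval (which has length $1-2\epsilon$ and is a translate of $[x_0-1,x_0]$), the associated bi-infinite \tp\ matrix falls into the invertible regime of \cite{BFP82}, hence is onto $\ell^\infty(\bZ)$.

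Concretely, the steps are: (1) verify the entries of $G$ form a \tp\ (or sign-regular of order $\infty$) bi-infinite matrix, using \eqref{eq:1} and the strict monotonicity of $\{k+\delta_k\}$ and $\{l\}$; (2) verify $G$ is band-dominated / has the exponential off-diagonal decay needed to be a bounded operator on all $\ell^p$ and to apply both \cite{BFP82} and Proposition~\ref{stab}; (3) quote the invertibility criterion of \cite{BFP82} and check that the positioning $\delta_k\in[x_0+M-1+\epsilon, x_0+M-\epsilon]$ places $G$ in the invertible case — this is where the hypothesis on the Zak zero at $(x_0,1/2)$ is used, since that zero pinpoints the unique obstruction and the $\epsilon$-margin keeps us uniformly off it; (4) conclude $G$ is invertible, in particular onto, on $\ell^\infty(\bZ)$; (5) apply Proposition~\ref{surjlem} to pass to $\ell^1(\bZ)$.

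The main obstacle will be step~(3): extracting from \cite{BFP82} a clean, \emph{uniform} statement that identifies which positioning of the sample points $k+\delta_k$ yields an invertible bi-infinite \tp\ matrix, and then translating the abstract nondegeneracy condition there into the explicit interval condition $\delta_k\in[x_0+M-1+\epsilon,\,x_0+M-\epsilon]$ in terms of the Zak-transform zero $(x_0,1/2)$. The bridge between ``the Zak transform vanishes only at $(x_0,1/2)$'' and ``the collocation matrix with samples at $k+\delta_k$ is invertible'' is the crux — one expects that shifting the integer grid of samples to the grid $\{k+\delta_k\}$ corresponds, under the Zak transform, to evaluating $Zg$ along a curve that avoids the single zero precisely when the $\delta_k$ avoid the fiber over $x_0$ in the appropriate shifted sense, and the factor $1/2$ in the second coordinate of the Zak zero is what forces the length of the admissible $\delta_k$-interval to be strictly less than $1$ (matching $\alpha<1$). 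Making this correspondence rigorous, and ensuring the invertibility constant is independent of the particular sequence $(\delta_k)$, is the technical heart of the argument.
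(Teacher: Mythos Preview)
Your overall outline---verify total positivity of $G$, verify off-diagonal decay, invoke \cite{BFP82} to get surjectivity on $\ell^\infty$, then apply Proposition~\ref{surjlem}---matches the paper's architecture. But step~(3), which you correctly flag as the crux, is where your plan diverges from the paper and contains a real gap. You are looking for a Schoenberg--Whitney-type \emph{interlacing} criterion in \cite{BFP82} that would certify invertibility from the positioning of the $\delta_k$'s, and you acknowledge that translating the abstract nondegeneracy condition into the interval condition on $\delta_k$ is the ``technical heart.'' That translation never materializes in your proposal, and in fact the relevant result in \cite{BFP82} (de Boor--Friedland--Pinkus, not Fix--Pollen) is of a different nature: a bounded totally positive bi-infinite matrix is onto $\ell^\infty(\bZ)$ as soon as its range contains a single \emph{uniformly alternating} vector $u$, meaning $u_k u_{k+1}<0$ and $\inf_k|u_k|>0$. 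No interlacing condition is needed or available here.

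The missing idea is the explicit link between this alternation criterion and the Zak zero. Take $c_l=(-1)^l$; then
\[
(Gc)_k=\sum_{l\in\bZ} g(k+\delta_k-l)(-1)^l = Zg(k+\delta_k,\tfrac12) = (-1)^k\,Zg(\delta_k,\tfrac12),
\]
the last step by quasi-periodicity. Since $Zg(\cdot,\tfrac12)$ is real, continuous, and vanishes on $[0,1)$ only at $x_0$, and the $\delta_k$ are confined to $[x_0+M-1+\epsilon,x_0+M-\epsilon]$, the values $Zg(\delta_k,\tfrac12)$ all share a sign and are bounded below in modulus by some $\nu>0$. Hence $Gc$ is uniformly alternating, and \cite{BFP82} gives surjectivity on $\ell^\infty$. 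This computation is precisely why the second coordinate of the Zak zero is $\tfrac12$: it is forced by the choice $c_l=(-1)^l=e^{2\pi i l/2}$, not by any length constraint on the $\delta_k$-interval. Note also that \cite{BFP82} yields only \emph{surjectivity}, not invertibility; the injectivity of $G$ (your step~(4)) is a separate argument requiring the rationality of $\alpha$ and is handled later in the paper.
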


\begin{proof}
For the proof we use the following  fundamental result of 
  de Boor,  Friedland, and Pinkus~\cite[Cor.~1]{BFP82}. \emph{Assume that  $G$ is an  \tp\
  matrix that is bounded on $\ell ^\infty ( \bZ )$\footnote{The
    original results are formulated for sign-regular matrices and more
    general index sets.}. If the range of
  $G$ contains a uniformly alternating vector, then $G$ is onto $\ell
  ^\infty (\bZ )$. } This result is the key tool in our proofs. Related results on the invertibility of infinite
\tp\ matrices can be found in~\cite{CDMS81,Boor82,Dem82}.

Uniformly alternating means that we need to
  find a sequence $c\in \ell ^\infty (\bZ )$ such that $u=Gc$ satisfies
  $u(k) u(k+1) < 0$ for all $k\in \bZ $ and $\inf _{k\in \bZ } |u(k)| >0$.

  To accomplish this, we take $c_l=(-1)^l$ and
  compute $(Gc)_k$. This is
  \begin{align*}
    (Gc)_k &= \sum _{l\in \bZ } g(k+\delta _k - l ) (-1)^l \\
           &= Zg(k+\delta _k,\tfrac{1}{2}) \\
    &= (-1)^k Zg(\delta _k, \tfrac{1}{2}) \, ,
  \end{align*}
where the last identity comes from the quasi-periodicity of $Zg$. 
  Since $Zg(x,\tfrac{1}{2})$ is real-valued and  by assumption  $Zg(x,\tfrac{1}{2}) $ does not have any zeros on
the interval $[x_0-1+\epsilon ,   x_0-\epsilon ]$, the numbers
$Zg(\delta _k, \tfrac{1}{2})$ all have the same sign and $|Zg(\delta
_k, \tfrac{1}{2})| \geq \nu >0$ for all $k$. Consequently $u=Gc$ is
uniformly  alternating.

To apply the result of ~\cite{BFP82}, we observe that the matrix $G$  with
entries $g(k+\delta _k - l)$ is indeed \tp\ in the sense that  all
minors of $G$ of arbitrary dimension have a non-negative
determinant. But this follows from the definition~\eqref{eq:1} of total positivity.   

Thus  by ~\cite[Cor.~1]{BFP82} the matrix $G$ is onto
$\ell ^\infty (\bZ )$.   By Proposition~\ref{surjlem} $G$ is also onto $\ell
^1(\bZ )$. 
\end{proof}

To apply the above proposition, we need to select a suitable subset of
$x+\alpha \bZ $. This can always be done with the following lemma.

\begin{lemma} \label{lem7}
  Let $\alpha <1$, $\epsilon  < (1-\alpha )/2$, $M\in \bZ$,  $x_0 \in [0,1]$
  fixed,  and $x\in [0,1]$ arbitrary.

  (i) Then $x+ \alpha \bZ $ contains a subset $\{k+\delta
  _k: k\in \bZ \}$ with $\delta _k \in [x_0+M-1+\epsilon , x_0 +M-\epsilon
  ]$ for all $k\in \bZ $.

  (ii) If $\alpha $ is rational, $\alpha = p/q$, then the perturbation
  $\delta _k$ can be chosen to be $p$-periodic, $\delta _{k+np} =
  \delta _k$ for all $k,n\in \bZ $. 
\end{lemma}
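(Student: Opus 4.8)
\textbf{Proof plan for Lemma~\ref{lem7}.}

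The plan is to prove part (i) by an elementary covering argument: I want to show that the arithmetic progression $x+\alpha\bZ$ intersects every interval of length $\geq\alpha$, and that the target intervals $I_k := [x_0+M-1+\epsilon,\,x_0+M-\epsilon]+k$ (obtained by translating the fixed window $[x_0+M-1+\epsilon,\,x_0+M-\epsilon]$ by each integer $k$) have length $1-2\epsilon$. Since $\epsilon<(1-\alpha)/2$ gives $1-2\epsilon>\alpha$, each $I_k$ has length strictly greater than $\alpha$, hence contains a point of the progression $x+\alpha\bZ$. First I would fix $k$ and pick any $j=j(k)\in\bZ$ with $x+\alpha j\in I_k$; such a $j$ exists because consecutive progression points are $\alpha$ apart and $|I_k|>\alpha$. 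Then I set $\delta_k := x+\alpha j(k) - k$. By construction $k+\delta_k = x+\alpha j(k)\in x+\alpha\bZ$ and $\delta_k = (x+\alpha j(k)) - k \in I_k - k = [x_0+M-1+\epsilon,\,x_0+M-\epsilon]$, which is exactly the claimed containment. (A minor point: one should check the progression actually meets $I_k$ rather than skipping over it, but that is immediate from $|I_k|>\alpha$ together with the fact that $x+\alpha\bZ$ is a translate of $\alpha\bZ$, so it meets every half-open interval of length $\alpha$, hence every closed interval of length $>\alpha$.)

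For part (ii), with $\alpha=p/q$ in lowest terms, the key observation is that the progression $x+\alpha\bZ = x + \tfrac{p}{q}\bZ$ is invariant under translation by $p$: indeed $x+\tfrac{p}{q}(j+pq) = x+\tfrac{p}{q}j + p^2$, and more to the point $x+\tfrac pq\bZ$ shifted by $p$ is $x+p+\tfrac pq\bZ = x+\tfrac pq(q)+\tfrac pq\bZ = x+\tfrac pq\bZ$. So the whole configuration $\{x+\alpha j: j\in\bZ\}$ is $p$-periodic as a set. I would therefore first make the choices $\delta_0,\delta_1,\dots,\delta_{p-1}$ as in part (i), choosing $j(k)$ for $k=0,\dots,p-1$, and then \emph{define} $\delta_{k+np} := \delta_k$ for all $n\in\bZ$. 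It remains to verify that these periodically-extended $\delta_k$ still satisfy both requirements: the range condition $\delta_k\in[x_0+M-1+\epsilon,\,x_0+M-\epsilon]$ holds automatically since $\delta_{k+np}=\delta_k$ lies in the (translation-invariant in content) window; and the progression-membership condition $(k+np)+\delta_{k+np}\in x+\alpha\bZ$ holds because $(k+np)+\delta_k = (k+\delta_k) + np$ and $k+\delta_k\in x+\alpha\bZ$, so adding $np$ (a multiple of $p$) keeps us in the $p$-periodic set $x+\alpha\bZ$. Thus $\{k+\delta_k:k\in\bZ\}\subseteq x+\alpha\bZ$ with $\delta_k$ $p$-periodic, as claimed.

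I do not expect a serious obstacle here; the lemma is a bookkeeping statement. The only point requiring a moment's care is reconciling two facts in part (ii): we need $k+\delta_k\in x+\alpha\bZ$ to be preserved under $k\mapsto k+p$, which forces the \emph{offsets} $x+\alpha j - k$ to repeat with period $p$, and this works precisely because $\alpha q = p$ so that translating the index $k$ by $p$ can be absorbed by translating $j$ by $q$: $(k+p)+\delta_k = k+\delta_k+p = (x+\alpha j(k)) + \alpha q = x + \alpha(j(k)+q)\in x+\alpha\bZ$. One should also double-check the mild inequality chain $\epsilon<(1-\alpha)/2 \Rightarrow 2\epsilon<1-\alpha \Rightarrow 1-2\epsilon>\alpha$, and note that the hypothesis $\alpha<1$ is needed to even get $\epsilon>0$ room. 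With these elementary checks the proof is complete; the real work of the paper lies in Proposition~\ref{surj} and the injectivity argument of Section~4, not here.
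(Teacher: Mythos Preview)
Your proposal is correct and follows essentially the same approach as the paper's proof: in (i) both argue that each target interval has length $1-2\epsilon>\alpha$ and hence meets $x+\alpha\bZ$, and in (ii) both select $\delta_0,\dots,\delta_{p-1}$ first and then extend $p$-periodically, using that $x+\tfrac{p}{q}\bZ$ is invariant under translation by $p$ (equivalently, shifting the index $j$ by $q$). Your write-up is arguably tidier in that you keep the integer shift $M$ throughout and do not insist on $j_l\in\{0,\dots,q-1\}$, a restriction the paper imposes but does not actually need.
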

\begin{proof}
(i)   For arbitrary $k\in \bZ $ the interval $[k+ x_0-1+\epsilon , k+ x_0
  -\epsilon ]$ has length $1-2\epsilon >\alpha $ and thus contains at
  least one point of $x+ \alpha \bZ $, which we can write as $k+\delta
  _k$. 

  (ii) Let $\alpha = p/q \in \bZ $. For $l= 0,\dots , p-1$ choose an
  integer $j_l \in \{ 0, \dots, q-1\}$ such that
  $$
  x+\tfrac{p}{q}j_l \in [l+x_0 -1+\epsilon , l+x_0 -\epsilon ]
  $$
  and set
  $$
  \delta _l =   x+\tfrac{p}{q}j_l -l \, .
  $$
  Clearly, $0\leq j_0 < j_1 < \dots < j_{p-1} < q$. Now let $m= l +
  np$ with $l \in [0,p-1] \cap \bZ $ and $n\in \bZ$. Then
  $x+ \tfrac{p}{q}(j_l + nq) = x+\tfrac{p}{q}j_l +np  \in [l+np +x_0
  -1+\epsilon , l+np + x_0 -\epsilon ] = 
  [m+x_0 -1+\epsilon , m+x_0 -\epsilon ]$,
  and $\delta _m = x_0 + \tfrac{p}{q}(j_l +nq) - m = x_0 +
  \tfrac{p}{q}(j_l +nq) - (l+np) = \delta _l $. Thus $\delta _k $ is
  $p$-periodic.  
\end{proof}

\section{Injectivity.} For the surjectivity of $G$ we have 
     used the fact that $g$ is \tp , but nothing about the parameter
     $\alpha $.  Now we impose in addition that  $\alpha = p/q$ is
     rational.  In this case the action of $G$ is unitarily equivalent
     to a $p\times p$-matrix-valued function. This facilitates the
     analysis, as for square matrices surjectivity is equivalent to
     injectivity and to invertibility. 
     
     Let again $G$ be the matrix  with entries $G_{kl} = g(k+\delta
     _k-l)$.      Since $\alpha = p/q \in \bQ$, the sequence of perturbations can
     be chosen to be $p$-periodic by Lemma~\ref{lem7}.  We now  use the
     periodicity of $\delta _k$ to bring in a variant of
     the Zibulski-Zeevi matrices~\cite{zibulski-zeevi97}.
We write  $k=r+mp$ and $l=s+np$ with $r,s  = 0, 1, \dots , p-1 $, and $m,n\in \bZ
     $. Then $\delta _k=\delta _r$ and for all  $m\in \bZ$ and $ r= 0,
     1, \dots , p-1$ we have 
     \begin{align}
       (Gc)_{k} &=      \sum _{l\in \bZ } c_l g(k+\delta _k - l)
                  \notag \\
       &=       \sum _{s=0}^{p-1} \sum _{n\in \bZ } c_{s+np} g(r+mp + \delta _r
     -s - np ) = d_{r+mp} \, . 
         \label{eq:4}
     \end{align}
For fixed $r,s$  the sum over $n$ is a convolution, so we may take
Fourier series and obtain a matrix-valued equation of functions.
Write $x_s(\xi) = \sum _{n\in \bZ } c_{s+pn} e^{2\pi i np\xi }$ for
the Fourier series of the coefficient in the residue class $s$ and  $x
(\xi ) = (x _0(\xi), \dots , x_{p-1}(\xi ))$ for the associated vector-valued
function. Likewise $y_r(\xi ) = \sum _{m\in \bZ } d_{r+pm} e^{2\pi i
  mp\xi }$ and $y(\xi ) = (y_0(\xi ) , \dots , y_{p-1}(\xi ))$.  The
Fourier series of the sequence involving $g$ is precisely the  Zak
transform of $g$,
namely,  
$$
\sum _{n\in \bZ } g(r+\delta _r -s - np) e^{2\pi i np\xi } =
Z_pg(r+\delta _r - s, \xi ) \, .
$$
Consequently, after taking Fourier series,  \eqref{eq:4}  can be
recast as
\begin{equation}
  \label{eq:5}
  \sum _{s=0}^{p-1} x_s(\xi ) Z_pg(r+\delta _r-s,\xi ) = y_r(\xi )
\qquad \text{ for }  r= 0, 1, \dots , p-1 \, .
\end{equation}
Since $c\in \ell ^1(\bZ )$ and $G$ is bounded on $\ell ^1(\bZ )$,
\eqref{eq:5} holds pointwise for all $\xi\in [0,1/p]$. 
Let $A(\xi ) $ be the $p\times p$-matrix-valued function with entries
$$
A_{rs}(\xi ) = Z_pg(r+\delta _r-s,\xi ) \qquad  r,s= 0, 1, \dots , p-1 \, .
$$
This
matrix is related to the Zeevi-Zibulski matrix that occurs in many 
criteria for Gabor frames over rational lattices~\cite{GK19,zibulski-zeevi97,zibulski-zeevi98}.  
The pointwise identity \eqref{eq:5} can be written as
\begin{equation}
  \label{eq:m1}
A(\xi ) x(\xi ) = y(\xi ) \, .
\end{equation}

We  now translate the surjectivity of  the infinite matrix $G$ to a
property of the matrix-valued function $\det A(\xi )$. 
\begin{prop} \label{inj}
Assume that $\alpha = p/q$ is rational,  $|g(t)|\leq C
(1+|t|)^{-\sigma }$ for some $C, \sigma > 1$,  and that $G$ maps
$\ell ^1(\bZ )$ \emph{onto} $\ell ^1(\bZ )$. Then $G $ is one-to-one
on $\ell ^1(\bZ )$ and thus invertible on $\ell ^1(\bZ )$. 
\end{prop}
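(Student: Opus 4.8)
The plan is to push the whole statement through the fiberization \eqref{eq:4}--\eqref{eq:m1} already set up, replacing the bi-infinite matrix $G$ on $\ell^1(\bZ)$ by the $p\times p$ matrix-valued function $A(\xi)$. First I would fix the dictionary: the map $\Phi$ sending $c=(c_k)_{k\in\bZ}\in\ell^1(\bZ)$ to the $p$-tuple $x(\xi)=(x_0(\xi),\dots,x_{p-1}(\xi))$ with $x_s(\xi)=\sum_{n\in\bZ}c_{s+pn}e^{2\pi i np\xi}$ is a linear bijection from $\ell^1(\bZ)$ onto the space of $p$-tuples of functions with absolutely convergent Fourier series, since $\sum_{s=0}^{p-1}\sum_{n\in\bZ}|c_{s+pn}|=\|c\|_1$. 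The polynomial decay of $g$ forces each entry $A_{rs}(\xi)=Z_pg(r+\delta_r-s,\xi)$ to have an absolutely convergent Fourier series as well, so $x\mapsto A(\cdot)x$ preserves such $p$-tuples, and \eqref{eq:5} is exactly the identity $\Phi(Gc)=A(\cdot)\Phi(c)$. Under this dictionary the hypothesis that $G$ is onto $\ell^1(\bZ)$ becomes: for every $p$-tuple $y(\cdot)$ of functions with absolutely convergent Fourier series there is such a $p$-tuple $x(\cdot)$ with $A(\xi)x(\xi)=y(\xi)$ for every $\xi\in[0,1/p]$.

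The heart of the proof is to deduce from this that $\det A(\xi)\neq0$ for \emph{every} $\xi$. I would argue by contradiction. If $\det A(\xi_0)=0$, then $\mathrm{range}\,A(\xi_0)$ is a proper subspace of $\bC^p$; pick $w\in\bC^p\setminus\mathrm{range}\,A(\xi_0)$ and apply the surjectivity to the constant $p$-tuple $y(\cdot)\equiv w$, which corresponds via $\Phi^{-1}$ to the finitely supported sequence carrying $w_r$ in position $r$ for $r=0,\dots,p-1$. This yields $c\in\ell^1(\bZ)$ with $A(\xi)\Phi(c)(\xi)=w$ for all $\xi$; since $c\in\ell^1$, each fiber $\Phi(c)(\xi_0)$ is a genuine vector in $\bC^p$ (the series defining it converge absolutely), so evaluating at $\xi_0$ gives $w\in\mathrm{range}\,A(\xi_0)$, a contradiction. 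As $A(\xi)$ is $\tfrac1p$-periodic, $\det A$ is nowhere zero on $\bR$.

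With $\det A(\xi)\neq0$ everywhere, injectivity follows at once: if $Gc=0$ with $c\in\ell^1(\bZ)$, then $A(\xi)\Phi(c)(\xi)=0$ for every $\xi$, and invertibility of $A(\xi)$ forces $\Phi(c)(\xi)=0$ for all $\xi$, hence every Fourier coefficient of every $x_s$ vanishes and $c=0$. Thus $G$ is one-to-one on $\ell^1(\bZ)$. Since $G$ is also bounded on $\ell^1(\bZ)$ (the off-diagonal decay inherited from the polynomial decay of $g$) and onto $\ell^1(\bZ)$ by assumption, the open mapping theorem gives that $G$ is invertible on $\ell^1(\bZ)$. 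Alternatively, $\det A$ lies in the Wiener algebra and does not vanish, so by Wiener's lemma $A(\cdot)^{-1}$ again has absolutely convergent Fourier series and $G^{-1}=\Phi^{-1}A(\cdot)^{-1}\Phi$ is bounded on $\ell^1(\bZ)$ explicitly.

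The main obstacle is the middle step, turning the global surjectivity of $G$ into the pointwise nonvanishing of $\det A$; the care needed there is entirely in making sure \eqref{eq:5} holds at \emph{every} $\xi$ and that the fiber $\Phi(c)(\xi_0)$ is a genuine vector, both of which rest on the $\ell^1$ hypothesis and the resulting absolute convergence. The remaining ingredients --- the $\Phi$-dictionary, the injectivity step, and the open-mapping conclusion --- are routine.
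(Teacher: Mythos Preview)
Your argument is correct and follows essentially the same route as the paper: both use the fiberization \eqref{eq:4}--\eqref{eq:m1} to pass to the $p\times p$ matrix $A(\xi)$, test the surjectivity of $G$ against constant vectors $y(\xi)\equiv d\in\bC^p$ (i.e., finitely supported sequences) to conclude that $A(\xi)$ is onto---hence invertible---for every $\xi$, and then read off the injectivity of $G$ fiberwise. Your contradiction framing and the explicit discussion of the Wiener-algebra dictionary and absolute convergence are just a more detailed packaging of the same idea.
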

\begin{proof}
The surjectivity of $G$ on $\ell ^1(\bZ)$ implies in particular that
 every vector  $d= (d_0, d_1, \dots , d_{p-1}) \in \bC ^p$,
i.e, $d\in \ell ^1(\bZ ), d_k = 0$ for $k<0$ and $k\geq p$ is in the range of $G$. In this
case $y_r(\xi ) = \sum _{m\in \bZ } d_{r+pm} e^{2\pi i
  mp\xi } = d_r$  
and  the vector-valued function  $y(\xi ) = d$ is constant. Due to the surjectivity
of $G$  there exists
a sequence $c=c(d)\in \ell ^1(\bZ )$ with associated Fourier series $x^{(d)}(\xi )$, such that $Gc=d$. 
In view of \eqref{eq:m1}
$$
A(\xi ) x^{(d)}(\xi ) = d \quad \quad \text{ for all } \xi \in [0,1/p]\, .
$$
Since $d\in \bC ^p$ was arbitrary, this means that for fixed $\xi $
the $p\times p$-matrix is onto $\bC ^p$.  Consequently, $A(\xi )$ is
one-to-one and thus invertible for every $\xi \in [0,1/p]$.

Now  assume that $Gc = 0$ for some  $c\in \ell ^1(\bZ )$. Then $A(\xi
) x(\xi ) = 0$ for all $\xi \in [0,1]$. Since $A(\xi )$ is invertible,
this implies that $x(\xi ) \in \bC ^p$ must be the zero vector for all
$\xi $. Consequently, $x\equiv 0$ and thus $c=0$. We have proved that
$G$ is one-to-one on $\ell ^1(\bZ )$, whence $G$ is invertible on
$\ell ^1(\bZ )$.
\end{proof}

We finally  summarize the steps that yield the proof of our main
theorem. 

\begin{proof}[Proof of Theorem~\ref{new}]
 By Lemma~\ref{lem7} we can extract  a set $\{k+\delta _k: k\in \bZ \}$
 from every $x+\alpha \bZ $, such that $\delta _k$ avoids the zero
 $x_0$ of the Zak transform $Zg$. Then by  Proposition~\ref{surj}  the matrix $G = \big(g(k+\delta _k - l)\big)
 _{k,l\in \bZ }$ is onto  $\ell ^1(\bZ
 )$. 

For rational values of $\alpha $ the matrix $G$ is also one-to-one on
$\ell ^1(\bZ )$ by Proposition~\ref{inj}.  Consequently, $G$ is invertible on $\ell
^1(\bZ )$.

Finally, the spectral invariance guaranteed by
Proposition~\ref{stab}(i) implies that $G$ is invertible on $\ell
^\infty (\bZ )$. This is precisely the sufficient condition of
Proposition~\ref{charcis} to guarantee that $\cG (g,\alpha ,1)$ is a
frame.

We are done! 
\end{proof}

Alternatively, one could give  Proposition~\ref{inj} a more structural
touch. We use
$T_p$ for the translation operator on sequences defined by $(T_pc)_k =
c_{k-p}$ and observe that the underlying matrix $G$ of
Proposition~\ref{inj} commutes with $T_p$. 
\begin{prop}
Let $G = (G_{kl})_{k,l\in \bZ }$ be a bi-infinite matrix  with off-diagonal decay
  $$
  |G_{kl}| \leq C (1+|k-l|)^{-\sigma} \qquad k,l \in \bZ \, ,
  $$
  for some $\sigma >1$. 
Assume that $G$ commutes with the translation operator $T_{p}$ for
some $p\in \bN $. If $G$ is onto $\ell ^1(\bZ )$, then $G$ is
one-to-one on $\ell^1(\bZ )$ and thus invertible. 
\end{prop}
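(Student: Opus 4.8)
The plan is to realise $G$ as a $p\times p$ matrix‑valued multiplication operator — exactly as the Zeevi--Zibulski matrix was used in the proof of Proposition~\ref{inj}, but now purely structurally. I would decompose the index set by residues modulo $p$: write $k=r+mp$ and $l=s+np$ with $r,s\in\{0,\dots,p-1\}$ and $m,n\in\bZ$, and identify $\ell^1(\bZ)$ with $\ell^1(\bZ;\bC^p)$ via $c\mapsto(\tilde c_n)_{n\in\bZ}$, $\tilde c_n=(c_{np},\dots,c_{(p-1)+np})$. The identity $GT_p=T_pG$ is equivalent to $G_{k+p,l+p}=G_{k,l}$ for all $k,l$, so $G_{r+mp,s+np}$ depends on $m,n$ only through $m-n$; setting $(B_j)_{rs}=G_{r+jp,\,s}$ one gets that $G$ acts as the block convolution $\widetilde{Gc}=B*\tilde c$. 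The decay $|G_{kl}|\le C(1+|k-l|)^{-\sigma}$ with $\sigma>1$ gives $\sum_{j\in\bZ}\|B_j\|_{\mathrm{op}}\le\sum_{j,r,s}|G_{r+jp,\,s}|=\sum_{s=0}^{p-1}\sum_{k\in\bZ}|G_{k,s}|<\infty$, so the symbol $A(\xi)=\sum_{j\in\bZ}B_je^{2\pi ij\xi}$ is a continuous $\bC^{p\times p}$‑valued function on $[0,1]$, and for every $c\in\ell^1(\bZ;\bC^p)$, writing $\hat c(\xi)=\sum_n\tilde c_ne^{2\pi in\xi}$ (a continuous $\bC^p$‑valued function), one has $\widehat{Gc}(\xi)=A(\xi)\hat c(\xi)$ for \emph{every} $\xi\in[0,1]$.

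Next I would read off invertibility of the symbol from surjectivity of $G$. Fix an arbitrary $d_0\in\bC^p$ and let $d\in\ell^1(\bZ;\bC^p)$ be concentrated in the block $n=0$ with value $d_0$, so $\hat d\equiv d_0$. By hypothesis there is $c\in\ell^1$ with $Gc=d$, hence $A(\xi)\hat c(\xi)=d_0$ for all $\xi$; in particular $d_0\in\operatorname{range}A(\xi)$ for each fixed $\xi$. Since $d_0$ was arbitrary, $A(\xi)$ is onto $\bC^p$, and being a square matrix it is therefore invertible, for every $\xi\in[0,1]$. Injectivity of $G$ then follows immediately: if $Gc=0$ then $A(\xi)\hat c(\xi)=0$ for all $\xi$, so $\hat c\equiv0$ and $c=0$. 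Finally $G:\ell^1(\bZ)\to\ell^1(\bZ)$ is bounded (from the decay), injective, and surjective by hypothesis, so the open mapping theorem shows that $G$ is invertible with bounded inverse.

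The only step that is not mere bookkeeping is the passage from ``$G$ surjective on $\ell^1$'' to ``$A(\xi)$ surjective for each individual $\xi$'': this requires the Fourier‑series identity $\widehat{Gc}=A\,\hat c$ to hold pointwise, not just almost everywhere, which is precisely what the absolute summability $\sum_j\|B_j\|_{\mathrm{op}}<\infty$ (a consequence of $\sigma>1$) guarantees, making both $A$ and $\hat c$ continuous. Everything else is the standard dictionary between block‑Laurent operators and their symbols, together with the elementary fact that a surjective endomorphism of a finite‑dimensional space is invertible. One could alternatively invoke Wiener's lemma for matrix‑valued symbols to obtain $A(\cdot)^{-1}$ in the Wiener algebra, and hence $G^{-1}$ of block‑Laurent type with the same kind of decay, but this refinement is not needed for the statement as phrased.
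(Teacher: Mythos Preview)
Your proof is correct and is precisely the argument the paper has in mind when it writes ``The proof is similar to the proof of Proposition~\ref{inj}.'' You have carried out the same residue decomposition $k=r+mp$, $l=s+np$, used $T_p$-invariance to obtain a block-Laurent structure, passed to the matrix-valued symbol $A(\xi)$, and then argued exactly as in Proposition~\ref{inj} that surjectivity on $\ell^1$ forces $A(\xi)$ to be onto---hence invertible---for every $\xi$, from which injectivity of $G$ follows. Your explicit remark that the pointwise (rather than a.e.) identity $\widehat{Gc}=A\hat c$ is what makes the argument work, and that this is guaranteed by $\sigma>1$, is a useful clarification that the paper leaves implicit.
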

The proof is similar to the proof of Proposition~\ref{inj}.  

\def\cprime{$'$} \def\cprime{$'$} \def\cprime{$'$} \def\cprime{$'$}
  \def\cprime{$'$} \def\cprime{$'$}

\end{document}